\def\ps@pprintTitle{%
\let\@oddhead\@empty
\let\@evenhead\@empty
\def\@oddfoot{\centerline{\thepage}}%
\let\@evenfoot\@oddfoot}
\newtheorem{thm}{Theorem}[section]
\newtheorem{lemma}[thm]{Lemma}
\newtheorem{prop}[thm]{Proposition}
\newtheorem{definition}[thm]{Definition}
\newtheorem{remark}[thm]{Remark}
\newtheorem{ex}[thm]{Example}
\newcommand{\ma}{\measuredangle}
\begin{document}
\begin{frontmatter}
\title{Morse Theory for the Uniform Energy}
\author{Ian M.~Adelstein \\ Jonathan Epstein} 
\address{Department of Mathematics, Yale University\\ New Haven, CT 06520 United States}
\address{Department of Mathematics, University of Oklahoma\\ Norman, OK 73019 United States}
\begin{abstract} In this paper we develop a Morse theory for the uniform energy. We use the one-sided directional derivative of the distance function to study the minimizing properties of variations through closed geodesics. This derivative is then used to define a one-sided directional derivative for the uniform energy which allows us to identify gradient-like vectors at those points where the function is not differentiable. These vectors are used to restart the standard negative gradient flow of the Morse energy at its critical points. We illustrate this procedure on the flat torus and demonstrate that the restarted flow improves the minimizing properties of the associated closed geodesics.



\end{abstract}
\begin{keyword} closed geodesics  \sep calculus of variations \sep negative gradient flow
\MSC[2010] 53C20 \sep 53C22
\end{keyword}
\end{frontmatter}

\section{Introduction}

The existence of closed geodesics on Riemannian manifolds is one of the foundational questions in the study of global differential geometry. The results in this field arise from the study of the critical points of the Morse energy function defined on the loop space $\Omega$ of a Riemannian manifold $M$. 

\begin{definition} The \emph{Morse energy function} $E \colon \Omega \to \mathbb{R}$ is defined for a piecewise smooth closed curve $c \colon S^1 \to M$ as $$E(c)= \int_{S^1} \lvert \dot{c}(t) \rvert^2 \, dt$$
\end{definition}

A standard application of the first variation formula shows that the critical points of this function are closed geodesics (cf.~\cite{Mil}, Corollary 12.3). The first result in this field came at the end of the 19\textsuperscript{th} century and is attributed alternately to Cartan (cf.~\cite{doC}, Theorem 12.2.2) and Hadamard (cf.~\cite{Ber}, Theorem 202). They showed that the shortest curve in each nontrivial homotopy class is a closed geodesic. The modern proof of this result is an application of the negative gradient flow of the Morse energy, a procedure that deforms a closed curve within its homotopy class in the direction of maximal decrease of the Morse energy. We develop an extension to this negative gradient flow of the Morse energy that allows us to restart the flow at its critical points. We show by example that the restarted flow works to improve the minimizing properties of the associated closed geodesics. The negative gradient flow appears in the literature under various important forms, including as the curve shortening flow and the mean curvature flow (cf.~\cite{Cold}).



A defining property of a geodesic is that it is a locally distance minimizing curve. It is clear that a nontrivial closed geodesic can never be a \emph{globally} distance minimizing curve. Indeed, a closed geodesic can never minimize past half its length: traversing the geodesic in the opposite direction will always provide a shorter path. It is therefore natural to consider the largest interval on which a given closed geodesic is distance minimizing. This led Sormani \cite{Sor} to consider the notion of a 1/k-geodesic.

\begin{definition} A \emph{1/k-geodesic} is a closed geodesic $\gamma \colon S^1 \to M$ which is minimizing on all subintervals of length $l(\gamma)/k$, i.e. $$d(\gamma(t),\gamma(t+2\pi/k))=l(\gamma)/k \hspace{4mm} \forall t \in S^1 = \mathbb{R}/2\pi \mathbb{Z} .$$ 
\end{definition}



\begin{definition}\label{openly} An \emph{openly} 1/k-geodesic is a 1/k-geodesic that does not contain cut points at distance $l(\gamma)/k$, and therefore will always minimize on some open neighborhood of subintervals of length $l(\gamma)/k$. 
\end{definition}

As a first example we note that the great circles on the sphere are 1/2-geodesics but are not openly 1/2-geodesics. Additionally, by the compactness of $S^1$ and the local minimizing property of geodesics, we have that every closed geodesic is a 1/k-geodesic for some $k \geq 2$. Sormani \cite[Theorem 10.2]{Sor} demonstrated a one-to-one relationship between the openly 1/k-geodesics and the rotating smooth critical points of the uniform energy $E^k \colon M^k \to \mathbb{R}$, a finite dimensional approximation to the Morse energy function (cf.~\cite{Mil}, Theorem 16.2). 

\begin{definition}\label{ue} The \emph{uniform energy} $E^k \colon M^k \to \mathbb{R}$ is defined for an element $\bar{x} = (x_1, \ldots , x_k)  \in M^k $ as 
\begin{equation*}E^k(\bar{x}) = \sum_{i=1}^k \frac{d(x_i , x_{i+1})^2}{1/k} \hspace{9pt} where \hspace{6pt} x_{k+1} = x_1
\end{equation*}
\end{definition}

Sormani then asked \cite[Remark 10.5]{Sor} if there was a relationship between the degenerate smooth critical points of the uniform energy (Definition~\ref{degenerate}) and the minimizing properties of those curves which are close in a variational sense to an openly 1/k-geodesic. We demonstrate such a relationship in Remark~\ref{rem1} by first proving the following.




\begin{thm}\label{open_var} Let $\gamma \colon S^1 \to M$ be an openly 1/k-geodesic that is degenerate in the sense that there exists a variation $\alpha(s,t)$ of $\gamma$ through closed geodesics. Then there exists $\epsilon > 0$ such that for $|s|<\epsilon$ the closed geodesics $\alpha(s,t)$ are openly 1/k-geodesics.
\end{thm}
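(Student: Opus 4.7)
The plan is to exploit the continuity of the Riemannian cut distance function $\rho \colon UM \to (0,\infty]$, defined on the unit tangent bundle, together with the compactness of $S^1$. Recall that on a complete Riemannian manifold $\rho$ is continuous (with the natural topology on $(0,\infty]$), and a geodesic segment of length $L$ issuing from $p$ in direction $v \in U_pM$ is distance minimizing with endpoint strictly inside the cut locus if and only if $L < \rho(v)$. This is precisely the analytic restatement of the openly 1/k-geodesic condition, so it is the natural tool to invoke.

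First, I would translate the hypothesis into the inequality $\rho(\dot\gamma(t)/|\dot\gamma(t)|) > l(\gamma)/k$ for every $t \in S^1$. Since $S^1$ is compact and the function $t \mapsto \rho(\dot\gamma(t)/|\dot\gamma(t)|) - l(\gamma)/k$ is continuous and strictly positive, I can extract a uniform gap: there exists $\delta > 0$ with
$$\rho\bigl(\dot\gamma(t)/|\dot\gamma(t)|\bigr) > \frac{l(\gamma)}{k} + 2\delta \quad \text{for every } t \in S^1.$$

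Next, because the variation $\alpha$ consists of closed geodesics, each $\alpha(s,\cdot)$ is parameterized proportionally to arc length, and the maps $(s,t) \mapsto \dot\alpha(s,t)/|\dot\alpha(s,t)|$ and $s \mapsto l(\alpha(s,\cdot))$ are continuous, reducing at $s=0$ to the corresponding data for $\gamma$. Composing with $\rho$ and using compactness of $S^1$ once more, I would choose $\epsilon > 0$ so small that for all $|s| < \epsilon$ and all $t \in S^1$,
$$\rho\bigl(\dot\alpha(s,t)/|\dot\alpha(s,t)|\bigr) > \frac{l(\alpha(s,\cdot))}{k} + \delta.$$
By the characterization of the cut distance, this inequality says that the geodesic segment of $\alpha(s,\cdot)$ between parameter values $t$ and $t+2\pi/k$ is distance minimizing and that its endpoint $\alpha(s,t+2\pi/k)$ is not a cut point of $\alpha(s,t)$ along $\dot\alpha(s,t)$; since this holds for every $t$, the curve $\alpha(s,\cdot)$ is an openly 1/k-geodesic.

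The main obstacle I anticipate is producing the uniform gap cleanly: this rests on the classical but nontrivial continuity of the cut distance function on $UM$, which should be cited rather than reproven. An alternative route, avoiding $\rho$ altogether, would be to argue by contradiction using a sequence $s_n \to 0$ of parameters where the openly condition fails, extract a convergent subsequence of shortcut endpoints via compactness, pass to the limit, and derive a violation of the openly 1/k-geodesic property of $\gamma$ itself; this avoids citing the continuity of $\rho$ but trades it for a slightly lengthier compactness-and-limit argument.
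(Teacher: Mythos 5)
Your proposal is correct and rests on the same key ingredients as the paper's proof---the continuity of the cut-distance function on the unit tangent bundle together with compactness of $S^1$---the only difference being that the paper packages the argument as a contradiction with a sequence $s_n \to 0$ (exactly the ``alternative route'' you sketch at the end), while you extract a uniform gap $\delta$ directly via a tube-lemma style argument. If anything, your version is slightly more careful in letting the lengths $l(\alpha(s,\cdot))$ vary with $s$, whereas the paper treats the length $L$ of the curves in the variation as a single constant.
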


In Section~\ref{nsmooth} we consider the notion of a one-sided directional derivative $D^+ \colon TM \times TM \to \mathbb{R}$ for the Riemannian distance function $d \colon M \times M \to  \mathbb{R}$ (see Definition~\ref{d2_definition} and Lemma~\ref{direction_deriv2}). We apply this derivative to study the minimizing properties of curves which are close in a variational sense to an arbitrary 1/k-geodesic. In order to compare the minimizing properties of various closed geodesics we use the following notion.

\begin{definition}[\cite{Sor}, Definition 3.3] The \emph{minimizing index} of a closed geodesic $\gamma \colon S^1 \to M$, denoted $\emph{minind}(\gamma)$, is the smallest integer $k \geq 2$ such that $\gamma$ is a 1/k-geodesic.
\end{definition}

In comparing the minimizing properties of closed geodesics, we note that a lower minimizing index means that the closed geodesic minimizes on larger subintervals of length, hence has better minimizing properties. A closed geodesic $\gamma \colon S^1 \to M$ achieves the best possible minimizing property, $\text{minind}(\gamma)=2$, precisely when it minimizes between every pair $(\gamma(t),\gamma(t+\pi))$. 


\begin{thm}\label{strict_var} Let $\gamma \colon S^1 \to M$ have $\emph{minind}(\gamma)=k$. Let $\alpha(s,t)$ be a variation of $\gamma$ through closed geodesics with associated Jacobi field $J(t)$ satisfying $\langle J(t) , \dot{\gamma}(t) \rangle =0$. If there exist $p=\gamma(t_0)$ and $q=\gamma(t_0+2\pi/k)$ with 
\begin{equation}\label{eq1}
D^+_{(J(t_0),J(t_0+2\pi/k))} d \, (p,q)<0
\end{equation}
then the pair $(p,q)$ are cut points (i.e.~$\gamma$ is not an openly 1/k-geodesic) and there exists an $\epsilon >0$ such that for $0<s<\epsilon$ the closed geodesics $\alpha(s,t)$ in the variation have $\emph{minind}(\alpha(s,t)) =k+1$.
\end{thm}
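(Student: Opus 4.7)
The plan is to establish three claims: (a) $(p,q)$ are cut points, (b) $\alpha(s,\cdot)$ is not a $1/k$-geodesic for small $s > 0$, and (c) $\alpha(s,\cdot)$ is a $1/(k+1)$-geodesic for all small $s$. For (a), suppose instead that $q$ is not a cut point of $p$ along $\gamma$. Then $d$ is smooth near $(p,q)$ with the standard gradients $\nabla_p d(\cdot, q) = -\dot\gamma(t_0)/|\dot\gamma(t_0)|$ and $\nabla_q d(p,\cdot) = \dot\gamma(t_0+2\pi/k)/|\dot\gamma(t_0+2\pi/k)|$, so $D^+$ agrees with the ordinary directional derivative, which equals
\[
\frac{-\langle \dot\gamma(t_0), J(t_0) \rangle + \langle \dot\gamma(t_0+2\pi/k), J(t_0+2\pi/k) \rangle}{|\dot\gamma|} = 0
\]
by the orthogonality hypothesis, contradicting \eqref{eq1}.

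For (b), set $f(s) := d(\alpha(s,t_0), \alpha(s,t_0+2\pi/k))$ and $L(s) :=$ length of $\alpha(s,\cdot)|_{[t_0, t_0+2\pi/k]}$. Then $f \le L$ always and $f(0) = L(0) = l(\gamma)/k$. By Lemma~\ref{direction_deriv2} (applied to the smooth curve $s \mapsto (\alpha(s,t_0), \alpha(s,t_0+2\pi/k))$ with initial velocity $(J(t_0), J(t_0+2\pi/k))$), the right-sided derivative $f'(0^+) = D^+_{(J(t_0),J(t_0+2\pi/k))}d(p,q) < 0$. The first variation of length applied to $\alpha|_{[t_0, t_0+2\pi/k]}$ (a variation through geodesics) yields $L'(0) = [\langle \dot\gamma, J\rangle/|\dot\gamma|]_{t_0}^{t_0+2\pi/k} = 0$ by the same orthogonality, so $L(s) = l(\gamma)/k + O(s^2)$. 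Hence $L(s) - f(s) = -sD^+ + o(s) > 0$ for small $s > 0$, and the subarc of $\alpha(s,\cdot)$ over $[t_0, t_0+2\pi/k]$ is strictly longer than the distance between its endpoints, proving $\alpha(s,\cdot)$ is not a $1/k$-geodesic.

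For (c), since $\gamma$ is $1/k$-minimizing, the segment $\gamma|_{[t, t+2\pi/k]}$ is globally length-minimizing for every $t \in S^1$, so $\gamma(t+2\pi/(k+1))$ lies strictly before the cut point of $\gamma(t)$ along $\gamma$. Using continuity of the cut distance on a complete Riemannian manifold together with compactness of $S^1$ and smooth dependence of $\alpha$ on $(s,t)$, one obtains a uniform $\epsilon > 0$ such that for $|s| < \epsilon$ and every $t \in S^1$, the subsegment $\alpha(s,\cdot)|_{[t, t+2\pi/(k+1)]}$ still lies strictly inside the minimizing radius from $\alpha(s,t)$. With the constant-speed parametrization standard for geodesic variations, this gives $d(\alpha(s,t), \alpha(s,t+2\pi/(k+1))) = l(\alpha(s,\cdot))/(k+1)$ for every $t$, so $\alpha(s,\cdot)$ is a $1/(k+1)$-geodesic. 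Combined with (b) this forces $\emph{minind}(\alpha(s,\cdot)) = k+1$ for small $s > 0$.

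The principal obstacle is the uniformity in $t$ needed in step (c). One route appeals to continuity of the cut distance on a complete Riemannian manifold; a more self-contained alternative is to argue by contradiction, extracting sequences $s_n \to 0^+$ and $t_n \to t_\ast$ at which the $1/(k+1)$-property fails and passing to a subsequential limit to produce a curve from $\gamma(t_\ast)$ to $\gamma(t_\ast+2\pi/(k+1))$ strictly shorter than $l(\gamma)/(k+1)$, contradicting the fact that $\gamma|_{[t_\ast, t_\ast + 2\pi/k]}$ is a minimizing geodesic strictly containing $\gamma|_{[t_\ast, t_\ast + 2\pi/(k+1)]}$ in its interior. Steps (a) and (b) are then relatively short: (a) is a chain-rule observation once one knows the gradient of $d$ at a non-cut pair, and (b) is a comparison between the one-sided derivative supplied by \eqref{eq1} and the vanishing first variation of length.
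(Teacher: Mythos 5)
Your proposal is correct and follows essentially the same route as the paper's proof: the orthogonality of $J$ plus the first-variation formula for $d$ forces $(p,q)$ to be cut points, the negative one-sided derivative makes $d(\alpha(s,t_0),\alpha(s,t_0+2\pi/k))$ drop below the subarc length for small $s>0$ (so the curves are not $1/k$-geodesics), and continuity of the cut-distance function on $SM$ together with compactness of $S^1$ gives the $1/(k+1)$ property, hence $\operatorname{minind}=k+1$. Your treatment is somewhat more explicit than the paper's (e.g.\ handling $L(s)$ via the first variation of length rather than invoking constancy of length along a variation through closed geodesics), but the underlying argument is the same.
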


In \cite{Ade2} the first author addressed the non-differentiability of the uniform energy at those $\bar{x} \in M^k$ which contain pairs $(x_i,x_{i+1})$ of cut points by introducing a generalized notion of critical point for the uniform energy. In the current paper we introduce a one-sided directional derivative for the uniform energy (see Lemma~\ref{direction_deriv3}) that allows us to identify gradient-like vectors at these points of non-differentiability (Definition~\ref{grad_like}). These directions can be used to restart the negative gradient flow of the Morse energy at its critical points, a process we illustrate on a standard flat $n$-torus.



\begin{thm}\label{2torus_thm} Let $\mathbb{T}^n$ be a standard flat $n$-torus and $\gamma \colon S^1 \to \mathbb{T}^n$  a closed geodesic with $\emph{minind}(\gamma)=k$. Using the negative of a gradient-like vector of the uniform energy $E^{k} \colon (\mathbb{T}^n)^{k} \to \mathbb{R}$ to restart the negative gradient flow of the Morse energy yields a closed geodesic $\sigma \colon S^1 \to \mathbb{T}^n$ with $\emph{minind}(\sigma) \leq  k$.
\end{thm}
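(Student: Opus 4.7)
The plan is to lift everything to the universal cover $\mathbb{R}^n$ of $\mathbb{T}^n = \mathbb{R}^n/\Lambda$ and exploit the convexity of the closed Dirichlet--Voronoi cell $V \subset \mathbb{R}^n$ of the origin in the lattice $\Lambda$. The key observation is that a vector $w \in \mathbb{R}^n$ realizes the torus distance between its endpoints precisely when $w \in V$, and that $V$ is a convex polytope. First I would write $v \in \Lambda$ for the lattice vector representing the free homotopy class of $\gamma$ and note that the equipartition $\bar{x}_\gamma \in (\mathbb{T}^n)^k$ has consecutive lift-differences equal to $v/k$; the hypothesis $\emph{minind}(\gamma) = k$ forces $v/k \in V$.

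Next I would analyze the restart. Let $\xi \in T_{\bar{x}_\gamma}(\mathbb{T}^n)^k$ be a gradient-like vector of $E^k$ and set $\bar{y} = \bar{x}_\gamma - \epsilon\,\xi$ for small $\epsilon > 0$. For each consecutive pair $(y_i, y_{i+1})$ I would pick a minimizing geodesic and let $w_i \in \mathbb{R}^n$ be its lifted displacement, which automatically lies in $V$. Concatenating these segments produces a piecewise-geodesic closed polygon $\alpha_0 \subset \mathbb{T}^n$ whose free homotopy class is represented by the lattice vector $v' := w_1 + \cdots + w_k \in \Lambda$. Running the negative gradient flow of the Morse energy from $\alpha_0$ preserves this homotopy class and converges to a closed geodesic $\sigma$ with lattice vector $v'$ and length $|v'|$; its equipartition has consecutive lift-differences equal to $v'/k$.

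The conclusion then drops out by convexity: each $w_i$ lies in the convex polytope $V$, so the average $v'/k = \tfrac{1}{k}\sum_i w_i$ also lies in $V$, which means the straight segment of length $|v'|/k$ between consecutive equipartition points of $\sigma$ realizes the torus distance. Hence $\sigma$ is a $1/k$-geodesic and $\emph{minind}(\sigma) \leq k$.

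The main obstacle is the Morse-flow step: one must verify that the flow from $\alpha_0$ actually converges to a closed geodesic in the same free homotopy class. On the flat torus this is standard (no conjugate points, every free homotopy class contains a length-minimizing closed geodesic), but the proof should invoke it explicitly. A secondary subtlety is that when $\bar{x}_\gamma$ lies on the uniform-energy cut locus the minimizing lift $w_i$ may be non-unique; however, every consistent choice lies in $V$, so the convexity argument goes through regardless of which minimizer is selected.
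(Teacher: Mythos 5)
Your proof is correct, but it takes a genuinely different route from the paper. The paper exploits the product structure $\mathbb{T}^n = S^1 \times \cdots \times S^1$: writing $\gamma$ as an $(m_1,\ldots,m_n)$-geodesic with $m_1 = \max\{m_i\}$, so that $k = 2m_1$ by Proposition~\ref{product}, it exhibits the explicit gradient-like vector $(\partial y_1, -\partial y_1, \ldots, \partial y_1, -\partial y_1)$, observes that the perturb-and-reconnect step produces a curve that is homotopically trivial in the $y_1$ direction, and concludes that the restarted flow lands on a $(0,m_2,\ldots,m_n)$-geodesic of minimizing index $\leq k$; iterating drives the index down to that of the smallest winding number. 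Your Voronoi-cell argument replaces all of this bookkeeping with a single convexity observation: every minimizing reconnection has lifted displacements $w_i \in V$, the limit geodesic lies in the class $v' = \sum_i w_i$, and $v'/k \in V$ by convexity, which is exactly the $1/k$-minimizing condition (and, since a closed geodesic on a flat torus has constant displacement $v'/k$ over \emph{every} parameter interval of length $2\pi/k$, the condition at equipartition points does hold at all $t$, a point worth stating). What your approach buys is generality and robustness: it works for an arbitrary lattice $\Lambda$, not just a rectangular one, and it never uses the gradient-like property of the perturbation --- any perturbation of any $k$-tuple reconnected by minimizers would do --- so it proves a stronger statement with less structure. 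What the paper's approach buys is the explicit identification of the resulting homotopy class and the sharper iterated conclusion about which geodesic the process terminates at. Two shared caveats, not specific to your argument: if $v' = 0$ the flow collapses to a point rather than a closed geodesic (the paper's proof has the same degenerate case when all other $m_i$ vanish, and the theorem's phrasing implicitly excludes it), and the convergence of the restarted flow to a closed geodesic in the class $v'$ should be cited explicitly, as you note.
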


We note that the ideas in this paper extend naturally to the Alexandrov setting and are therefore related to results on more general metric spaces. In particular, a notion of conjugate point for length spaces is introduced in \cite{Shan} and we anticipate a relationship with the one-sided directional derivatives examined in this paper. Additionally, our restarted negative gradient flow of the Morse energy is related to the notion of discrete homotopy introduced in \cite{Bere}, see also \cite[Example 44]{Wil}.

\section{Smooth Critical Points}\label{sec_smooth}



In this section we address Sormani's question \cite[Remark 10.5]{Sor} concerning the relationship between the degenerate smooth critical points of the uniform energy and the minimizing properties of those curves which are close in a variational sense to an openly 1/k-geodesic. We demonstrate such a relationship in Remark~\ref{rem1} by first proving Theorem~\ref{open_var}.

\begin{definition}\label{scp} A \emph{smooth critical point} of the uniform energy is a point $\bar{x} \in M^k$ at which the function $E^k$ is smooth and its gradient is zero,  i.e.~$\nabla E^k(\bar{x})=\bar{0}$.
\end{definition}

We relate the smooth critical points to closed geodesics, thereby relating the domains of the uniform energy and the Morse energy function.

\begin{lemma}[\cite{Sor}, Lemma 10.2]\label{assoc} Let $\bar{x} = (x_1, \ldots , x_k)  \in M^k $ be a smooth critical point of the uniform energy. Then there exists a unique associated closed geodesic $\gamma \colon S^1 \to M$ with $\gamma(2\pi i /k)=x_i$. 
\end{lemma}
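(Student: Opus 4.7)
The plan is to construct $\gamma$ by concatenating minimizing geodesic segments between the consecutive $x_i$, and to show that vanishing of $\nabla E^k(\bar{x})$ is equivalent to the concatenation having no corners. First I would use the smoothness hypothesis: for $E^k$ to be smooth at $\bar{x}$, each squared distance $d(x_i,x_{i+1})^2$ must be smooth in a neighborhood of $(x_i,x_{i+1})$. This forces $x_{i+1}$ to lie outside the cut locus of $x_i$, so there is a \emph{unique} minimizing geodesic $\gamma_i$ from $x_i$ to $x_{i+1}$. I would parametrize $\gamma_i$ with constant speed on the interval $[2\pi(i-1)/k,\, 2\pi i/k]$ and define $\gamma \colon S^1\to M$ by concatenating these pieces, so that by construction $\gamma(2\pi i/k) = x_i$.

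Next I would verify that $\gamma$ is in fact a smooth closed geodesic. Under the identification of $E^k(\bar{x})$ with the Morse energy of the piecewise geodesic curve $\gamma$, the partial gradient $\nabla_{x_i} E^k(\bar{x})$ can be read off from the first variation formula. Since each segment $\gamma_i$ is already a geodesic, the interior contributions vanish, leaving only the boundary terms at the break points; the standard identity $\nabla_x[d(x,y)^2] = -2\exp_x^{-1}(y)$ at a non-cut pair then gives
\begin{equation*}
\nabla_{x_i} E^k(\bar{x}) = 2k\bigl(\dot\gamma(2\pi i/k^{-}) - \dot\gamma(2\pi i/k^{+})\bigr),
\end{equation*}
that is, twice the jump in velocity at $x_i$ (up to the constant from Definition~\ref{ue}). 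The hypothesis $\nabla E^k(\bar{x}) = \bar{0}$ therefore forces $\dot\gamma(2\pi i/k^{-}) = \dot\gamma(2\pi i/k^{+})$ for every $i$, so $\gamma$ is $C^1$; by uniqueness of solutions of the geodesic equation across these matched initial data, $\gamma$ is in fact smooth.

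For uniqueness, I would observe that the whole construction is forced: any closed geodesic $\tilde\gamma$ with $\tilde\gamma(2\pi i/k) = x_i$ restricts on each arc $[2\pi(i-1)/k,2\pi i/k]$ to a geodesic joining $x_i$ to $x_{i+1}$ of length $l(\tilde\gamma)/k$. Because the uniform energy equals the Morse energy of such a piecewise geodesic curve and, among curves with these fixed nodal points, is minimized exactly when each segment is the unique minimizing geodesic (available by the non-cut condition established above), the segments of $\tilde\gamma$ must coincide with our chosen $\gamma_i$, giving $\tilde\gamma = \gamma$.

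I expect the main subtlety to be the identification in the middle step: spelling out the gradient of $E^k$ in a geometric form and noting that the smoothness hypothesis is what lets us apply the non-cut formula for $\nabla d^2$ at each pair $(x_i,x_{i+1})$ simultaneously. Once that identification is in hand, both the geodesic property and the uniqueness follow without further work.
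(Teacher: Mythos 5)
The paper itself offers no proof of this lemma --- it is imported verbatim from Sormani (\cite{Sor}, Lemma 10.2) --- so your argument can only be judged on its own terms. The construction half is the standard one and is essentially sound: non-cut consecutive pairs, unique minimizing segments $\gamma_i$, the identity $\nabla_x d(x,y)^2=-2\exp_x^{-1}(y)$ away from the cut locus, and vanishing of $\nabla E^k(\bar{x})$ forcing the one-sided velocities to agree at each node, so the concatenation is a single closed geodesic. One caveat there: your opening claim that smoothness of $E^k$ at $\bar{x}$ forces smoothness of each summand $d(x_i,x_{i+1})^2$ is not automatic, since each coordinate $x_i$ appears in two summands and cancellation of singularities is not ruled out by fixing variables; closing this requires an extra observation (for instance local semiconcavity of $d^2$, so that a sum of such terms is non-differentiable wherever one term has a corner, together with the Hessian blow-up at conjugate cut points).

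The genuine gap is in your uniqueness step. You assert that any closed geodesic $\tilde\gamma$ with $\tilde\gamma(2\pi i/k)=x_i$ must have minimizing segments because the piecewise-minimizing curve minimizes the Morse energy among curves with these nodal points; but nothing constrains the energy of $\tilde\gamma$ by $E^k(\bar{x})$, so being a closed geodesic through the nodes does not make it that minimizer --- the inference is a non sequitur. In fact the stronger statement you are trying to prove is false: on the round sphere take $k=3$ and three equally spaced points on a great circle. This is a smooth critical point (the two vectors $\exp_{x_i}^{-1}(x_{i\pm1})$ cancel at each $x_i$), yet besides the once-traversed great circle, the same great circle traversed twice in the opposite direction is a closed geodesic passing through $x_1,x_2,x_3$ at the times $2\pi i/3$. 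The uniqueness in Sormani's lemma refers to the \emph{associated} geodesic, i.e.\ the one whose arcs between consecutive $x_i$ are minimizing (equivalently, whose energy equals $E^k(\bar{x})$); with that reading, uniqueness follows at once from your first step, since the minimizing geodesic between each non-cut pair is unique and, together with the matching of velocities, determines $\gamma$. So the fix is to state and prove uniqueness within that class, rather than attempting to exclude all closed geodesics through the nodal points.
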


The uniform energy is defined as a sum of squared distance functions, hence its differentiability will depend on the differentiability of its component distance functions. At points $\bar{x} \in M^k$ which do not contain pairs $(x_i, x_{i+1})$ of cut points the component distance functions are smooth (cf.~\cite{Sak}, Proposition 4.8) and it follows that the uniform energy is a smooth function ($C^{\infty}$-differentiable). A standard technique from Morse theory is to use the injectivity radius to restrict the uniform energy to a domain on which it is smooth. The uniform energy is smoothly equivalent to this restricted function in a neighborhood of a smooth critical point and the differentiability of the two functions therefore agree at such points.  The following theorem reformulates the standard relationship between the Hessians of the Morse energy function and its restricted function. Recall that the Hessian of the uniform energy at a smooth critical point $\bar{x}\in M^k$ is a symmetric bilinear form on the tangent space $T_{\bar{x}}M^k$ given by $\emph{Hess} (E^k)(\bar{x})(\bar{v},\bar{w})=\langle \nabla_{\bar{v}} \emph{grad}(E^k),\bar{w} \rangle.$




\begin{thm}[\cite{Mil}, Theorem 16.2]\label{ue_diff} The index and nullity of the Hessian of the uniform energy at a smooth critical point equal the index and nullity of the Hessian of the Morse energy at the unique associated closed geodesic. 
\end{thm}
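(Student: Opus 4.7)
The plan is to realize $E^k$, up to a positive constant, as the restriction of the Morse energy $E$ to a finite-dimensional space of broken geodesic loops, and then to split the Hessian of $E$ at the associated closed geodesic into two blocks: one that reproduces the Hessian of $E^k$, and a complementary block that is positive definite. First I would fix the partition $t_i = 2\pi i/k$ of $S^1$ and, using that $\bar{x}=(x_1,\ldots,x_k)$ is smooth (so each pair $(x_i,x_{i+1})$ lies outside the cut locus), identify an open neighborhood of $\bar{x}$ in $M^k$ with the set $B \subset \Omega$ of loops whose restriction to $[t_i,t_{i+1}]$ is the unique minimizing constant-speed geodesic from $c(t_i)$ to $c(t_{i+1})$. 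Since the energy of such a segment equals $d(c(t_i),c(t_{i+1}))^2/(t_{i+1}-t_i)$, the restriction $E|_B$ agrees with $E^k$ up to a positive multiplicative constant, which does not affect the index or nullity of a Hessian.

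Next, at the smooth closed geodesic $\gamma$ provided by Lemma~\ref{assoc}, I would decompose $T_\gamma \Omega = T_\gamma B \oplus V$, where $T_\gamma B$ is the space of broken Jacobi fields along $\gamma$ (piecewise Jacobi on each segment, continuous at the break points) and $V$ is the space of piecewise smooth vector fields along $\gamma$ vanishing at every $\gamma(t_i)$. Given $W \in T_\gamma\Omega$, there is a unique broken Jacobi field $J$ matching $W$ at the break points (assuming each segment has no interior conjugate points), and then $W_0 = W - J$ lies in $V$. I would then verify that the index form $I$ of $E$ at $\gamma$ is block-diagonal with respect to this splitting: $I(J,W_0)=0$ follows from integration by parts on each subsegment, using that elements of $V$ vanish at the break points while $J$ satisfies the Jacobi equation on each segment (the boundary terms arising from the jumps of $\nabla W_0$ pair against values of $J$ at the break points, but in this pairing one of the two factors vanishes). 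Moreover, $I$ is positive definite on $V$, by the classical fact that on a geodesic segment with no interior conjugate points the Dirichlet-type index form is positive definite on vector fields vanishing at the endpoints; this is applied segment by segment.

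These two facts yield the theorem immediately. Because the complementary block is strictly positive, the index and nullity of $I$ on $T_\gamma \Omega$ equal those of its restriction to $T_\gamma B$, which is the Hessian of $E|_B$, hence equal (up to a positive rescaling) the Hessian of $E^k$ at $\bar{x}$. The main obstacle is the positive-definiteness on $V$, which amounts to verifying that each segment $\gamma|_{[t_i,t_{i+1}]}$ has no interior conjugate points. This rests precisely on the hypothesis that $\bar{x}$ is a \emph{smooth} critical point: smoothness of $E^k$ at $\bar{x}$ forces each pair $(x_i,x_{i+1})$ to lie outside the cut locus, so each minimizing segment is strictly shorter than its cut distance and therefore strictly shorter than its first conjugate point, so that the classical positive-definiteness applies and the decomposition above is valid.
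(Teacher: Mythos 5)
Your argument is correct, and it is essentially the classical proof of Milnor's Theorem 16.2 (adapted from the fixed-endpoint path space to the free loop space), which is exactly the source the paper cites in lieu of giving its own proof: identify $E^k$ up to a positive constant with $E$ restricted to the broken-geodesic approximation, split $T_\gamma\Omega$ into broken Jacobi fields plus fields vanishing at the break points, check the index form is block diagonal and strictly positive definite on the second factor (using that smoothness of $E^k$ at $\bar{x}$ rules out cut, hence conjugate, points on each segment), and conclude equality of index and nullity. The only cosmetic slip is in the cross-term computation, where the integration by parts is most cleanly performed on the broken Jacobi field (so the boundary terms pair its derivative jumps against the vanishing values of $W_0$ at the break points); this does not affect the argument.
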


The nullity of the Hessian of the Morse energy is equal to the number of linearly independent periodic Jacobi fields along the curve (cf.~\cite{BTZ, GM}). The Hessian of the Morse energy is always degenerate with respect to the vector field $\dot{\gamma}(t)$ and we note likewise that the Hessian of the uniform energy at a smooth critical point $\bar{x}$ is always degenerate with respect to the vector $\bar{\gamma}= [\dot{\gamma}(t_1), \ldots, \dot{\gamma}(t_k) ] \in T_{\bar{x}}M^k$. In order to discuss variations through closed geodesics we restrict our attention to the perpendicular periodic Jacobi fields, and therefore exclude the rotationally degenerate direction in the following definition. 

\begin{definition}\label{degenerate} A smooth critical point $\bar{x} \in M^k$ is said to be \emph{degenerate} if there exists $\bar{v} \in T_{\bar{x}}M^k$ such that $\nabla_{\bar{v}} \emph{grad}(E^k)(\bar{x}) =\bar{0}$ and $\langle v_i, \dot{\gamma}(t_i) \rangle =0$ for every $i = 1, \ldots , k$.
\end{definition}





%
%
%

\begin{proof}[Proof of Theorem~\ref{open_var}] 
Assume by contradiction that no such $\epsilon >0$ exists. Let $L$ be the length of the curves in the variation. Then there exists a sequence $p_n = \alpha(s_n,t_n)$ with $s_n \to 0$ such that
\begin{align*}
f(p_n, v_n) \leq L / k, \qquad v_n = \frac{\frac{\partial \alpha}{\partial t}(s_n, t_n)}{|\frac{\partial \alpha}{\partial t}(s_n, t_n)|}
\end{align*}
where $f \colon SM \to \mathbb{R} \cup \infty$ is the distance to the cut point along a geodesic with initial conditions $(p, v) \in SM$ (cf.~\cite{doC}, Proposition 13.2.9). Since this function is continuous and $S^1$ is compact, passing to a subsequence if necessary, we have $t_n \to t_0$. Setting $$p_0=\alpha(0, t_0) \qquad v_0 =  \frac{\frac{\partial \alpha}{\partial t}(0, t_0)}{|\frac{\partial \alpha}{\partial t}(0, t_0)|}$$ we have by continuity that $$v_0 = \lim_{n \to \infty} v_n = \dot{\gamma}(t_0)/|\dot{\gamma}(t_0)|$$
The fact that $\gamma$ is an openly 1/k-geodesic implies that $f(\gamma(t),  \dot{\gamma}(t)/|\dot{\gamma}(t)| )> L/k $ for every $t \in S^1$. By the continuity of $f$ we have
\begin{align*}
f(p_0, v_0) = \lim_{n \to \infty} f(p_n, v_n) \leq L / k
\end{align*}
and we have reached a contradiction. 
\end{proof}

\begin{remark}\normalfont \label{rem1} We now use Theorem~\ref{open_var} to address Sormani's open question regarding the relationship between the degenerate smooth critical points of the uniform energy and the minimizing properties of those curves which are close in a variational sense to an openly 1/k-geodesic \cite[Remark 10.5]{Sor}. Let $\bar{x} \in M^k$ be a degenerate smooth critical point of the uniform energy. By Theorem~\ref{ue_diff} we know that the unique closed geodesic $\gamma$ associated to $\bar{x}$ admits a perpendicular periodic Jacobi field $J$. If $\gamma$ is an openly 1/k-geodesic, and if the variation of $\gamma$ with variational field $J$ is through closed geodesics, then by Theorem~\ref{open_var} we conclude that this variation is through openly 1/k-geodesics, i.e.~that the minimizing properties of $\gamma$ are preserved through the variation. We note that not every perpendicular periodic Jacobi field yields a variation through closed geodesics, and that this assumption is necessary in order to discuss the minimizing properties of these nearby curves in the variation. 
\end{remark}

%
%
%
\section{Directional Derivative of the Distance Function}\label{nsmooth}

In this section we explore the notion of a one-sided directional derivative for the Riemannian distance function $d \colon M \times M \to  \mathbb{R}$. This directional derivative is then applied to study the minimizing properties of those closed geodesics which are close in a variational sense to an arbitrary 1/k-geodesic (see Theorem~\ref{strict_var}). The directional derivative of the distance function will additionally be applied in Section~\ref{ue_flow} to define a one-sided directional derivative for the uniform energy. 

\begin{definition} \label{d2_definition} The \emph{one-sided directional derivative} of the distance function $d \colon M \times M \to  \mathbb{R}$ at the point $(p,q) \in M \times M$ in the direction  $(v,w) \in T_{(p,q)}M \times M$ is defined to be
\begin{equation*}\label{d2_def} D^+_{(v,w)} d \, (p,q) = \lim_{t \to 0^+} \frac{d(  \exp_p tv , \exp_q t w  ) - d(p,q)}{t}
\end{equation*}
provided that this limit exists. 
\end{definition}

The next lemma follows from the first variation formula and states that the above limit always exists. For an exposition of these ideas, albeit in the nonpositive and nonnegative length space setting, see \cite[Section 4.5]{BBI}.

\begin{lemma}\label{direction_deriv2} Let $A(p,q)$ denote the set of unit-speed minimizing geodesics from $p$ to $q$ and set $d(p,q)=l$. Then for a vector $(v,w) \in T_{(p,q)}M \times M$ the one-sided derivative of $d \colon M \times M \to  \mathbb{R}$ in the direction $(v,w)$ exists and is given by
\begin{equation*}\label{direc_deriv2} D^+_{(v,w)} d \, (p,q) = \min{ \{ - \langle v,\dot{\eta}(0)  \rangle_p -  \langle w, -\dot{\eta}(l)  \rangle_q    :  \eta \in A(p,q)   \}. } 
\end{equation*}
\end{lemma}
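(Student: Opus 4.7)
The plan is to prove the two inequalities separately via the first variation formula for arc length: show that $\limsup_{t\to 0^+}$ of the difference quotient is at most the claimed minimum (by exhibiting a competitor variation for each $\eta \in A(p,q)$), and show $\liminf_{t\to 0^+}$ is at least the same quantity (via a limiting minimizing geodesic extracted from minimizers between the perturbed endpoints).

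For the upper bound, fix $\eta \in A(p,q)$ and construct any smooth variation $\beta(s,\tau)$ with $\beta(0,\tau)=\eta(\tau)$, $\beta(s,0)=\exp_p(sv)$, and $\beta(s,l)=\exp_q(sw)$ (such a variation exists for small $s$ by, e.g., interpolating the endpoint perturbation in a tubular neighborhood of $\eta$). Since $\eta$ is a unit-speed geodesic and the variation field equals $v$ at $\tau=0$ and $w$ at $\tau=l$, the first variation formula gives
\[
\restrictto{\frac{d}{ds}}{s=0} L(\beta(s,\cdot)) = \langle w,\dot{\eta}(l)\rangle-\langle v,\dot{\eta}(0)\rangle = -\langle v,\dot{\eta}(0)\rangle-\langle w,-\dot{\eta}(l)\rangle.
\]
Since $d(\exp_p tv,\exp_q tw)\leq L(\beta(t,\cdot))$, Taylor expansion in $t$, dividing by $t>0$, sending $t\to 0^+$, and then minimizing over $\eta \in A(p,q)$ supplies the upper bound.

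For the lower bound, fix any sequence $t_n\to 0^+$ and choose unit-speed minimizing geodesics $\eta_n$ from $p_n=\exp_p(t_nv)$ to $q_n=\exp_q(t_nw)$ of length $l_n=d(p_n,q_n)$. Since $l_n\to l$ and $p_n\to p$, the initial velocities $\dot{\eta}_n(0)$ lie in a compact subset of $TM$; pass to a subsequence so that $\dot{\eta}_n(0)$ converges to some unit vector $\xi\in T_pM$. Continuity of $\exp$ then forces the limit geodesic $\eta_0(\tau)=\exp_p(\tau\xi)$ to join $p$ and $q$ with length $l$, hence to lie in $A(p,q)$, and also gives $\dot{\eta}_n(l_n)\to\dot{\eta}_0(l)$. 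Now construct a variation $\beta_n(s,\tau)$ of $\eta_n$, defined for $s\in[-t_n,0]$, with $\beta_n(0,\cdot)=\eta_n$, $\beta_n(s,0)=\exp_p((s+t_n)v)$, and $\beta_n(s,l_n)=\exp_q((s+t_n)w)$, so that $\beta_n(-t_n,\cdot)$ is a curve from $p$ to $q$. Applying the first variation formula to $\eta_n$ and using $L(\beta_n(-t_n,\cdot))\geq d(p,q)=l$ yields
\[
\frac{l_n-l}{t_n}\geq -\langle v,\dot{\eta}_n(0)\rangle-\langle w,-\dot{\eta}_n(l_n)\rangle - O(t_n),
\]
and passing to the subsequence bounds this below by $-\langle v,\dot{\eta}_0(0)\rangle-\langle w,-\dot{\eta}_0(l)\rangle$, hence by the claimed minimum. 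Since this holds for every sequence $t_n\to 0^+$ (every subsequence has a further sub-subsequence yielding the same kind of bound), the $\liminf$ dominates the minimum.

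The main obstacle is ensuring the $O(t_n^2)$ remainder in the first-variation Taylor expansion is uniform in $n$, since we are differentiating a family of varying geodesics $\eta_n$ rather than a single fixed one. This is handled by the smoothness of $\exp$ together with the smooth convergence $\eta_n\to\eta_0$ on the compact interval $[0,l]$, which makes the second derivative of $s\mapsto L(\beta_n(s,\cdot))$ uniformly bounded for $n$ large; a minor adjacent point, that the limit $\eta_0$ is in fact minimizing rather than merely a geodesic from $p$ to $q$, follows immediately from $L(\eta_0)=\lim l_n = l = d(p,q)$.
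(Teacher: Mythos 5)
Your proof is correct in outline, but it takes a genuinely different route from the paper. The paper does not argue directly in the smooth category: it observes that a compact Riemannian manifold is an Alexandrov space of curvature bounded below by some $k$, and then adapts the first variation machinery of Burago--Burago--Ivanov (\cite[Theorem 4.5.6, Corollary 4.5.7]{BBI}), extending their statement from the case $k=0$ with one fixed endpoint to an arbitrary lower curvature bound and two moving endpoints; the minimum over shortest paths then translates into the stated minimum over $A(p,q)$. You instead give a self-contained Riemannian argument: the upper bound for the $\limsup$ by exhibiting, for each $\eta\in A(p,q)$, a competitor variation and applying the first variation of arc length, and the lower bound for the $\liminf$ by extracting a convergent subsequence of minimizing geodesics $\eta_n$ between the perturbed endpoints, applying first variation along each $\eta_n$, and comparing with $d(p,q)$. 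Your approach buys elementarity and independence from comparison geometry, at the cost of the one genuinely fussy point you correctly flag: the remainder in the first-variation expansion along the varying geodesics $\eta_n$ must be uniform in $n$, which does follow from smooth dependence of geodesics on initial data and the $C^\infty$ convergence $\eta_n\to\eta_0$, provided the variations $\beta_n$ are built by a uniform construction (and note the endpoint variation fields are the velocities of $t\mapsto\exp_p(tv)$, $t\mapsto\exp_q(tw)$ at $t_n$ rather than $v,w$ themselves, an error correctly absorbed into your $O(t_n)$ term). The paper's approach buys generality: the same statement and proof survive in the length-space/Alexandrov setting, which matters for the authors' remark that their results should extend beyond the Riemannian category, whereas your argument is tied to the smooth exponential map.
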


\begin{proof}
Since compact Riemannian manifolds are Alexandrov spaces of curvature bounded below by some $k$, it suffices to prove the lemma for such spaces. The following indicates how the arguments presented in \cite[Section 4.5]{BBI} may be modified to provide a proof. Let $X$ be such a space and fix $p,q \in X$. Let $\gamma_1 \colon [0,T] \to X$ and $\gamma_2 \colon [0,T] \to X$ be two unit speed geodesics with $\gamma_1(0) = p$ and $\gamma_2(0)=q $. Suppose that for each $t \in [0,T]$, $\sigma_t \colon [0,1] \to X$ is a shortest path connecting $\gamma_1(t)$ with $\gamma_2(t)$, and $\{ \sigma_{t_i} \}$ is a sequence that converges to $\sigma_0$. Set $\ell(t) = d(\gamma_1(t),\gamma_2(t))$. Then we claim that
\begin{align} \label{eq:first variation formula}
\lim_{i \to \infty} \frac{\ell(t_i) - \ell(0)}{t_i} = -\cos \alpha_1 - \cos \alpha_2
\end{align}
where $\alpha_i$ is the angle made by $\sigma_0$ with $\gamma_i$. We note that \cite[Theorem 4.5.6]{BBI} is the special case of \eqref{eq:first variation formula} when $k = 0$ and $\gamma_2$ is the constant path at $q$. Since equation (4.3) in the proof of \cite[Theorem 4.5.6]{BBI} holds for spaces with an arbitrary curvature bound, a straightforward modification of the inequalities shows that \eqref{eq:first variation formula} holds for arbitrary lower curvature bound and two variable endpoints.

Now let $B(p,q)$ denote the set of minimizing paths from $p$ to $q$ and define
\begin{align*}
B := \min \{ - \cos(\ma(\gamma_1,\sigma_0)) - \cos(\ma(\gamma_2,\sigma_0)) \ | \ \sigma_0 \in B(p,q) \}.
\end{align*}
With almost no modification, the proof of \cite[Corollary 4.5.7]{BBI} shows that
\begin{align*} \label{eq:one-sided derivative formula}
& \lim_{t \to 0^+} \frac{\ell(t) - \ell(0)}{t} = B.
\end{align*}
When $X$ is a compact Riemannian manifold, the $B$ defined here can be expressed as the minimum from the statement of the lemma, and the result follows.
\end{proof}

\begin{figure}[!htb]\centering
    \includegraphics[width=9.5cm, height=3.6cm]{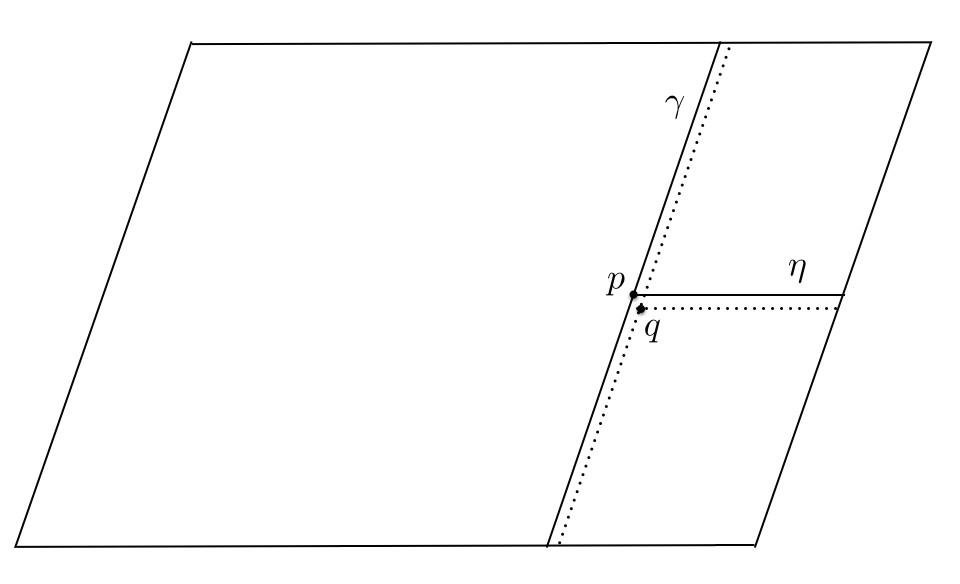}
    \caption{The doubled rectangle from Example~\ref{double_ex}. Note that $p$ is on the top face of the doubled rectangle, whereas $q$ is on the bottom face.} 
    \label{double}
    \end{figure}

\begin{ex}\normalfont \label{double_ex} The doubled rectangle $M$ in Figure~\ref{double} illustrates Theorem~\ref{strict_var}. The over-under geodesic $\gamma$ has $\text{minind}(\gamma)=2$. The pair of points $(p,q)$ on $\gamma$ are connected via another minimizing geodesic, $\eta \colon [0,\pi] \to M$, which connects the point $p = \eta(0)=\gamma(t_0)$ on the top face to the point $q = \eta(\pi)=\gamma(t_0+\pi)$ on the bottom face. The closed geodesic $\gamma$ admits a variation $\alpha(s,t)$ through closed geodesics simply by sliding $\gamma$ left or right along the doubled rectangle. The Jacobi field associated to this variation coincides with the velocity vector of $\eta$, i.e.~$J(t_0)=\dot{\eta}(0)$ and $J(t_0+\pi)= -\dot{\eta}(\pi)$. Computing via Lemma~\ref{direction_deriv2} we have $$D^+_{(J(t_0),J(t_0+\pi))} d \, (p,q) = - \langle J(t_0),\dot{\eta}(0)  \rangle_p -  \langle J(t_0+\pi), -\dot{\eta}(\pi)  \rangle_q =-2$$ so that by Theorem~\ref{strict_var} we can conclude that $\text{minind}(\alpha(s,t)) =3$ for $0<s<\epsilon$. Indeed, these curves $\alpha(s,t)$ fail to minimize between the pairs $(\alpha(s,t_0),\alpha(s,t_0+\pi))$ as $\eta$ now provides a shorter path between these points. Also worthy of note, although not a consequence of the theorem, is that $\text{minind}(\alpha(s,t)) =2$ for $-\epsilon<s<0$, i.e.~sliding $\gamma$ to the left does not change its minimizing index. 
\end{ex}

\begin{proof}[Proof of Theorem~\ref{strict_var}] We first note that Equation \eqref{eq1} and Lemma~\ref{direction_deriv2} together with the fact that $\langle J(t) , \dot{\gamma}(t) \rangle =0$ imply that there are multiple minimizing geodesics joining $p$ and $q$, and therefore that the pair are cut points.

Next let $L$ be the length of the curves in the variation. By Equation \eqref{eq1} there exists an $\epsilon >0$ such that for $0<s<\epsilon$ we have $$d(\alpha(s,t_0),\alpha(s,t_0+2\pi/k)) < L/k.$$ This condition implies that these $\alpha(s,t)$ are not 1/k-geodesics. By continuity of the function $f \colon SM \to \mathbb{R} \cup \infty$ (see the proof of Theorem~\ref{open_var}) we have $\text{minind}(\alpha(s,t)) =k+1$.
\end{proof}

%
%
%
\section{Gradient Flow for the Uniform Energy}\label{ue_flow}

The negative gradient flow of the Morse energy function has been used to demonstrate the existence of closed geodesics on non-simply connected manifolds by deforming closed curves within their homotopy class in the direction of decreasing energy. The flow terminates at the critical points of the Morse energy function, the closed geodesics. In this section we develop a procedure by which the negative gradient flow can be restarted from these critical points. We prove Theorem~\ref{2torus_thm} which demonstrates this procedure on a standard flat $n$-torus. In this setting, we see that the extended negative gradient flow need not respect homotopy class and works to improve the minimizing index of the associated closed geodesics. 

On non-simply connected manifolds the shortest non-contractible closed curve has minimizing index 2 \cite[Lemma 4.1]{Sor}. In the simply connected setting, metrics on the 2-sphere have been constructed for every $k \geq 2$ that fail to admit $1/k$-geodesics (see \cite{Ho} and also \cite{Ade}). As Theorem~\ref{2torus_thm} illustrates, the procedure developed in this section works in some cases to improve the minimizing index of the associated closed geodesics, and therefore could be used to identify the minimal minimizing index $k$ on a given simply connected manifold. This value $k$ can then be used to provide an upper bound of $k \cdot diam(M)$ on the length of the shortest closed geodesic (cf. \cite[Lemma 3.2]{Sor}). 

We now briefly sketch this restarting procedure, with additional details given throughout the section. Start by identifying a pair of cut points $(x_1, x_2)$ on the given closed geodesic which are at maximal distance for such pairs. Using the smallest integer $k \geq 2$ such that the given closed geodesic is a 1/k-geodesic, identify the point $\bar{x} = (x_1, \ldots , x_k) \in M^k$ where the points $ \{ x_3, \ldots , x_k \}$ are chosen to be evenly spaced along the remainder of the geodesic. Note by construction that $d(x_1,x_2) \geq L/k \geq d(x_i, x_{i+1})$ for $i \neq 1$, where $L$ is the length of the closed geodesic. Next, perturb $\bar{x}$ in the negative of a gradient-like direction (see Definition~\ref{grad_like}) and reconnect the perturbed pairs $(x_i, x_{i+1})$ via minimal geodesics. The resulting curve will be a piecewise smooth closed curve from which one can restart the negative gradient flow of the Morse energy function.

At smooth critical points $\bar{x} \in M^k$ of the uniform energy we have that $\nabla E^k(\bar{x})=\bar{0}$ and are unable to identify directions in which to restart the negative gradient flow. We therefore focus on those $\bar{x} \in M^k$ which contain pairs $(x_i,x_{i+1})$ of cut points. At such $\bar{x} \in M^k$ the Hessian of the uniform energy is not defined and the second derivative test is not available.  We introduce a one-sided directional derivative for the uniform energy that will allow us to identify gradient-like vectors at these points, and hence directions of maximal decrease for the uniform energy. 




\begin{definition} \label{de_definition} The \emph{one-sided directional derivative} of the uniform energy $E^k \colon M^k \to \mathbb{R}$ at the point $\bar{x} \in M^k$ in the direction $\bar{v} \in T_{\bar{x}} M^k$ is defined to be
\begin{equation*}\label{de_def} D^+_{\bar{v}} E^k (\bar{x}) = \lim_{t \to 0^+} \frac{  E^k(  \exp_{\bar{x}} t \bar{v} )  - E^k(\bar{x})  }{t}
\end{equation*}
provided that this limit exists. 
\end{definition}

\begin{lemma}\label{direction_deriv3} Let $\bar{x} \in M^k$ with $d(x_i, x_{i+1})=l_i$. For a vector $\bar{v} \in T_{\bar{x}} M^k$ the one-sided derivative of $E^k \colon M^k \to \mathbb{R}$ in the direction $\bar{v}$ exists and is given by 
\begin{align*}\label{direc_deriv3} & \tfrac{1}{2k} D^+_{\bar{v}} E^k \, (\bar{x}) =\\ 
&  \min   { \{ - \langle \, \bar{v} , ( \ldots, - l_{i-1} \cdot \dot{\eta}_{i-1}(l_{i-1}) + l_i \cdot \dot{\eta}_{i}(0) , - l_i \cdot  \dot{\eta}_{i}(l_i) + l_{i+1} \cdot \dot{\eta}_{i+1}(0) , \ldots) \, \rangle    \} }
\end{align*}
where the \emph{min} is taken over all k-tuples $(\eta_1, \ldots, \eta_k)$ such that $\eta_i \colon [0,l_i] \to M$ are unit-speed minimizing geodesics with $\eta_i(0)=x_i$ and $\eta_i(l_i)=x_{i+1}$.
\end{lemma}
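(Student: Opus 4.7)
The plan is to reduce the computation to Lemma~\ref{direction_deriv2} applied to each distance factor in Definition~\ref{ue}, and then swap the sum with the minimum and re-index by the vertex $x_i$. Concretely, write $d_i(t) := d(\exp_{x_i}(tv_i), \exp_{x_{i+1}}(tv_{i+1}))$, so that the difference quotient appearing in Definition~\ref{de_definition} factors as
\begin{equation*}
\frac{E^k(\exp_{\bar{x}} t\bar{v}) - E^k(\bar{x})}{t} = k \sum_{i=1}^k \frac{d_i(t) - l_i}{t} \cdot \bigl(d_i(t) + l_i\bigr).
\end{equation*}

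Next I would pass to the limit as $t \to 0^+$ termwise. Continuity of the Riemannian distance gives $d_i(t) + l_i \to 2l_i$, while Lemma~\ref{direction_deriv2} simultaneously guarantees that the factor $(d_i(t)-l_i)/t$ converges to $D^+_{(v_i,v_{i+1})} d(x_i,x_{i+1})$ and evaluates that limit as a minimum over $A(x_i,x_{i+1})$. Since the sum is finite, exchanging limit and sum is immediate, and one obtains
\begin{equation*}
\tfrac{1}{2k} D^+_{\bar{v}} E^k(\bar{x}) = \sum_{i=1}^k l_i \cdot \min_{\eta_i \in A(x_i,x_{i+1})} \bigl\{ -\langle v_i, \dot{\eta}_i(0)\rangle - \langle v_{i+1}, -\dot{\eta}_i(l_i)\rangle \bigr\}.
\end{equation*}

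Because the $k$ minimizations are over disjoint sets of variables, with each $\eta_i$ appearing in exactly one summand, the sum of the minima equals the minimum of the sum over all $k$-tuples $(\eta_1,\ldots,\eta_k)$. I would then collect terms by $v_i$: the cyclic index shift $i \mapsto i-1$ turns the cross term $l_i\langle v_{i+1}, \dot{\eta}_i(l_i)\rangle$ into a contribution $l_{i-1}\langle v_i, \dot{\eta}_{i-1}(l_{i-1})\rangle$ at coordinate $i$, which combined with the diagonal term $-l_i\langle v_i, \dot{\eta}_i(0)\rangle$ pairs $v_i$ with $-l_{i-1}\dot{\eta}_{i-1}(l_{i-1}) + l_i\dot{\eta}_i(0)$. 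Pulling the overall minus sign out of the inner product yields the stated formula.

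The only real obstacle is bookkeeping: the cyclic re-indexing modulo $k$ and keeping signs straight must be done carefully, and one should confirm that swapping sum and min is legitimate (it is, precisely because the choices of $\eta_i$ are independent). All the analytic content has already been packaged into Lemma~\ref{direction_deriv2}, so no further estimates or limit arguments are needed.
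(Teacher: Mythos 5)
Your argument is exactly the one the paper leaves implicit: it deduces the formula from Lemma~\ref{direction_deriv2} and the definition of $E^k$ as ($k$ times) a sum of squared distances, and your bookkeeping — the factorization $d_i(t)^2-l_i^2=(d_i(t)-l_i)(d_i(t)+l_i)$, the termwise limit, the exchange of the finite sum with the independent minimizations, and the cyclic regrouping by $v_i$ — is correct and reproduces the stated expression. This matches the paper's (one-line) proof, just with the details written out.
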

\begin{proof} This follows directly from Lemma~\ref{direction_deriv2} and the definition of the uniform energy as a sum of squared distance functions.
\end{proof}

\begin{definition}\label{cand_grad} The \emph{candidate gradients} for the uniform energy at $\bar{x} \in M^k$ are the vectors $$-( \ldots, - l_{i-1} \cdot \dot{\eta}_{i-1}(l_{i-1}) + l_i \cdot \dot{\eta}_{i}(0) , - l_i \cdot  \dot{\eta}_{i}(l_i) + l_{i+1} \cdot \dot{\eta}_{i+1}(0) , \ldots) \in T_{\bar{x}} M^k$$ where the $\eta_i \colon [0,l_i] \to M$ are the unit-speed minimizing geodesics from above. 
\end{definition}

The following proposition, which follows directly from the definition, serves to illuminate the geometric significance of the candidate gradients. We say that a closed geodesic $\gamma \colon S^1 \to M$ is \emph{associated} to the point $\bar{x} \in M^k$ if there exist $t_i \in S^1$ with $\gamma(t_i) = x_i$ for every $i \in \{1, \ldots, k \}$. 

\begin{prop}\label{assoc_cand} A point $\bar{x} \in M^k$ has an associated closed geodesic if and only if $\bar{0} \in T_{\bar{x}} M^k$ is a candidate gradient at $\bar{x}$.
\end{prop}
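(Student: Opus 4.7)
The plan is to prove both directions by unpacking the candidate gradient condition componentwise and matching it to the geometric requirement that the $\eta_i$'s concatenate into a single smooth closed geodesic passing through the prescribed points with even spacing along the parameter circle.

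For the implication ``$\bar 0$ is a candidate gradient $\Rightarrow$ there is an associated closed geodesic'': by Definition~\ref{cand_grad} there exist unit-speed minimizing geodesics $\eta_i \colon [0,l_i]\to M$ with $\eta_i(0)=x_i$ and $\eta_i(l_i)=x_{i+1}$ such that, for every $i$,
\[
l_i\,\dot\eta_i(0) \;=\; l_{i-1}\,\dot\eta_{i-1}(l_{i-1}) \;\in\; T_{x_i}M.
\]
Comparing norms on both sides, and using that the $\dot\eta_i$ are unit vector fields, forces all $l_i$ to coincide with a single value $l$. The residual identity $\dot\eta_{i-1}(l_{i-1})=\dot\eta_i(0)$ then says that the incoming and outgoing velocities match at each $x_i$. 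Concatenating the $\eta_i$'s therefore yields a $C^1$ closed curve that is geodesic on each segment, and the uniqueness of geodesics with prescribed initial data promotes this to smoothness at every junction. After rescaling the resulting curve to $S^1=\mathbb{R}/2\pi\mathbb{Z}$, I obtain a closed geodesic $\gamma$ with $\gamma(2\pi(i-1)/k)=x_i$, hence associated to $\bar x$.

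For the reverse implication: given an associated closed geodesic $\gamma$ parametrized so that the $t_i$ are evenly spaced and the arc between consecutive $x_i$ is a minimizing geodesic, let $\eta_i$ denote the unit-speed reparametrization of $\gamma|_{[t_i,t_{i+1}]}$. Then all $l_i$ equal $L/k$, where $L$ is the length of $\gamma$, and $\dot\eta_{i-1}(l_{i-1})=\dot\eta_i(0)$ because both coincide with the unit tangent of $\gamma$ at $x_i$. Substituting these $\eta_i$'s into Definition~\ref{cand_grad} makes the $i$-th entry equal $-l\,\dot\eta_{i-1}(l_{i-1})+l\,\dot\eta_i(0)=0$, so $\bar 0$ is a candidate gradient at $\bar x$.

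The main subtlety is essentially bookkeeping: one has to notice that the single scalar equation ``the $i$-th component vanishes'' simultaneously encodes two geometric conditions, namely that consecutive segment lengths agree and that the terminal velocity of $\eta_{i-1}$ matches the initial velocity of $\eta_i$ at $x_i$. Because these two conditions together are precisely what is needed to assemble the $\eta_i$'s into a single smooth closed geodesic evenly threaded through the $x_i$, the proposition reduces to the computation above and, as claimed, follows directly from Definition~\ref{cand_grad}.
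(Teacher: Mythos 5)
The paper itself supplies no argument here—it simply asserts that the proposition ``follows directly from the definition''—so your proposal is, in effect, the verification the authors leave to the reader, and your forward half (candidate gradient $\bar{0}$ $\Rightarrow$ associated closed geodesic) is correct and complete: vanishing of the $i$-th component of the vector in Definition~\ref{cand_grad} gives $l_i\,\dot\eta_i(0)=l_{i-1}\,\dot\eta_{i-1}(l_{i-1})$, comparing norms forces all $l_i$ to agree, the resulting matching of unit velocities (including the $i=1$ component, which closes the loop) makes the concatenation of the $\eta_i$ a smooth closed geodesic through the $x_i$.

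Your converse, however, has a gap relative to the definition of ``associated'' actually stated in the paper. You take $\gamma$ ``parametrized so that the $t_i$ are evenly spaced and the arc between consecutive $x_i$ is a minimizing geodesic,'' but the paper's definition only requires the existence of $t_i\in S^1$ with $\gamma(t_i)=x_i$: neither even spacing nor minimality of the intermediate arcs is part of it, and neither can be arranged by reparametrization, since a geodesic's parametrization is affine and the arc lengths between the $x_i$ are fixed by the geometry. With the bare definition the implication can fail: on the round sphere take $k=2$ and two non-antipodal points $p,q$ on a great circle $\gamma$; then $\gamma$ is associated to $\bar{x}=(p,q)$, but the minimizing geodesic $\eta$ from $p$ to $q$ is unique, its reversal is the unique minimizing geodesic from $q$ to $p$, and the only candidate gradient is $(-2l\,\dot\eta(0),\,2l\,\dot\eta(l))\neq\bar{0}$, where $l=d(p,q)$. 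So the hypotheses of equal spacing and minimizing arcs are doing real work in your argument: they are exactly the properties your forward direction produces, and they reflect the authors' evident intent for ``associated,'' but as written your converse proves a narrower statement than the literal biconditional. To be airtight you should either note explicitly that ``associated'' is being read as ``threaded through the $x_i$ in cyclic order by minimizing arcs of equal length,'' or record that the converse requires this strengthening of the stated definition.
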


We are interested in determining the directions $\bar{v} \in T_{\bar{x}} M^k$ which maximize the directional derivative of the uniform energy. We therefore choose the candidate gradient with maximal magnitude in the definition below.

\begin{definition}\label{grad_like} A \emph{gradient-like} vector $\bar{v} \in T_{\bar{x}} M^k$ for the uniform energy at a point $\bar{x}$ is a candidate gradient which has maximal magnitude, where the magnitude is calculated using the Riemannian product structure on $M^k$.
\end{definition}

Note that $\bar{0} \in T_{\bar{x}} M^k$ will be the gradient-like vector precisely when it is the only candidate gradient. In particular, this occurs when the point $\bar{x}$ is a smooth critical point. 

This paper concludes with the proof of Theorem~\ref{2torus_thm} which illustrates the process of using a gradient-like vector on a standard flat $n$-torus to restart the negative gradient flow of the Morse energy and improve the minimizing properties of the associated closed geodesics. We first continue with the necessary preliminaries. 

\begin{lemma}\label{product_lemma} Let $M = M_1 \times \cdots \times M_n$ be a Riemannian product with $\gamma = (\gamma_1 , \ldots,  \gamma_n)$ a geodesic. Then $\gamma$ minimizes between $p=(p_1, \ldots, p_n)=\gamma(0)$ and $q=(q_1, \ldots ,q_n)=\gamma(t)$ if and only if $\gamma_i$ minimizes between $p_i=\gamma_i(0)$ and $q_i=\gamma_i(t)$ for every $i = 1, \ldots ,n $. 
\end{lemma}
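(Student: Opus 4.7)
The plan is to reduce the claim to two standard facts about the Riemannian product $M = M_1 \times \cdots \times M_n$: the length decomposition $L(\gamma)^2 = \sum_{i=1}^n L(\gamma_i)^2$ valid for any product geodesic $\gamma = (\gamma_1, \ldots, \gamma_n)$, and the ``Pythagorean'' distance identity $d_M(p,q)^2 = \sum_{i=1}^n d_{M_i}(p_i, q_i)^2$. Once both are in hand, the equivalence falls out of a short comparison of sums.

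To verify the length decomposition, one uses that $\gamma$ being a geodesic in the product forces each component $\gamma_i$ to be a constant-speed geodesic in $M_i$ and yields $|\dot\gamma|^2 = \sum_i |\dot\gamma_i|^2$; integrating on $[0,t]$ and squaring gives the claim. For the distance identity, the inequality $d_M(p,q)^2 \leq \sum_i d_{M_i}(p_i,q_i)^2$ is produced by choosing unit-speed minimizing geodesics $\eta_i$ in each factor, reparameterizing them proportionally on a common interval, and assembling the resulting product curve. The reverse inequality requires a little more care: given any piecewise smooth competitor $c = (c_1, \ldots, c_n)$ from $p$ to $q$, set $f_i(t) = |\dot c_i(t)|$ and apply Minkowski's integral inequality to obtain
\[
L(c) = \int_0^1 \sqrt{\textstyle\sum_i f_i(t)^2}\, dt \;\geq\; \sqrt{\textstyle\sum_i \bigl(\int_0^1 f_i(t)\, dt\bigr)^2} \;=\; \sqrt{\textstyle\sum_i L(c_i)^2} \;\geq\; \sqrt{\textstyle\sum_i d_{M_i}(p_i,q_i)^2},
\]
and take the infimum over such $c$.

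With both facts available, $\gamma$ is minimizing in $M$ if and only if $L(\gamma)^2 = d_M(p,q)^2$, which by the two identities is equivalent to $\sum_i L(\gamma_i)^2 = \sum_i d_{M_i}(p_i,q_i)^2$. Since each summand on the left already dominates the corresponding summand on the right, equality of the totals forces termwise equality $L(\gamma_i) = d_{M_i}(p_i, q_i)$ for every $i$, which is exactly the assertion that each $\gamma_i$ is minimizing in $M_i$. The only genuinely nontrivial step in this program is invoking Minkowski's integral inequality to establish the lower bound for $L(c)$; the rest is direct bookkeeping with the product structure.
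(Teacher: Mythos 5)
Your argument is correct and complete. The paper itself states Lemma~\ref{product_lemma} without proof (it is invoked only in the proof of Proposition~\ref{product} and treated as a standard fact about Riemannian products), so there is no in-paper argument to compare against; your write-up supplies the missing details. The key points all check out: since $\gamma$ is a product geodesic, each $\gamma_i$ is a constant-speed geodesic, and the constancy of speed is exactly what upgrades $|\dot\gamma|^2=\sum_i|\dot\gamma_i|^2$ to the exact identity $L(\gamma)^2=\sum_i L(\gamma_i)^2$ (for a non-geodesic curve one would only get an inequality); the Pythagorean formula $d_M(p,q)^2=\sum_i d_{M_i}(p_i,q_i)^2$ is correctly established in both directions, with the lower bound following from the triangle inequality for $\mathbb{R}^n$-valued integrals (your Minkowski step, $\bigl(\sum_i(\int f_i)^2\bigr)^{1/2}\le\int\bigl(\sum_i f_i^2\bigr)^{1/2}$); and the termwise-equality argument at the end, using $L(\gamma_i)\ge d_{M_i}(p_i,q_i)$ summand by summand, delivers both implications of the ``if and only if'' at once. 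A marginally shorter route, had you wanted one, is to note that a geodesic of length $\ell$ is minimizing iff no curve between its endpoints has length less than $\ell$, and to run the same two inequalities without isolating the distance identity as a separate step; but your organization via $d_M^2=\sum_i d_{M_i}^2$ is clean and is the standard way this fact is proved.
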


\begin{prop}\label{product} Let $M = M_1 \times \cdots \times M_n$ be a Riemannian product with $\gamma = (\gamma_1 , \ldots,  \gamma_n) \colon S^1 \to M$ a closed geodesic. Then $$\emph{minind}_M(\gamma)=\max{ \{ \emph{minind}_{M_i}(\gamma_i) \}}.$$
\end{prop}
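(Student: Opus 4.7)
The plan is to reduce the identity to the equivalence: $\gamma$ is a 1/k-geodesic in $M$ if and only if each $\gamma_i$ is a 1/k-geodesic in $M_i$. From this equivalence the formula on minimizing indices follows immediately by taking the smallest admissible $k$.

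First I would observe that $\gamma$ is a closed geodesic and hence has constant speed, so the restriction satisfies $l(\gamma|_{[t,t+2\pi/k]}) = l(\gamma)/k$ for every $t$. Thus $\gamma$ is a 1/k-geodesic exactly when $\gamma|_{[t,t+2\pi/k]}$ is a minimizing geodesic segment for every $t$. By Lemma~\ref{product_lemma} such a restriction minimizes in $M$ if and only if each coordinate restriction $\gamma_i|_{[t,t+2\pi/k]}$ minimizes in $M_i$. Since each $\gamma_i$ is also a constant-speed geodesic (with $|\dot{\gamma}_i|$ constant as a component of the constant $\dot{\gamma}$), the latter condition says precisely that $\gamma_i$ is a 1/k-geodesic in $M_i$.

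Next I would record the small monotonicity fact that if $\gamma$ is a 1/k-geodesic then $\gamma$ is a 1/k'-geodesic for every $k' \geq k$: any subinterval $[t,t+2\pi/k']$ sits inside the minimizing subinterval $[t,t+2\pi/k]$, and a sub-segment of a minimizing geodesic is itself minimizing (otherwise one could shortcut the larger segment through the endpoints of the sub-segment, contradicting its minimality). Combined with the equivalence above, this shows that $\text{minind}_M(\gamma) \leq k$ if and only if $\text{minind}_{M_i}(\gamma_i) \leq k$ for every $i$, which is the same as $k \geq \max_i \text{minind}_{M_i}(\gamma_i)$; the smallest such $k$ is precisely the claimed maximum.

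There is no serious obstacle here, since Lemma~\ref{product_lemma} does all of the geometric work. The one case worth a brief remark is when some $\gamma_i$ is constant (i.e.~$\gamma$ lies in a slice of the product): such a $\gamma_i$ is trivially a 1/k-geodesic for every $k \geq 2$, so it contributes only the baseline value $2$ to the maximum, consistent with the stated identity.
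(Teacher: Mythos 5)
Your proof is correct and follows essentially the same route as the paper's: both arguments reduce everything to Lemma~\ref{product_lemma} applied to subintervals of parameter length $2\pi/k$, together with the (implicit in the paper) monotonicity that a $1/k$-geodesic is a $1/k'$-geodesic for $k' \geq k$. Your packaging as a componentwise equivalence, plus the remark on constant factors $\gamma_i$, is a clean rephrasing of the paper's two-inequality argument rather than a different method.
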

\begin{proof}
Without loss of generality assume $ \text{minind}_{M_1}(\gamma_1) =\max{ \{ \text{minind}_{M_i}(\gamma_i) \}}$ and set $k = \text{minind}_{M_1}(\gamma_1) $. Then $\gamma_i$ minimizes between $\gamma_i(t)$ and $\gamma_i(t+2\pi/k)$ for all $t\in S^1$ and $i = 1, \ldots, n$. By the lemma $\gamma$ minimizes between $\gamma(t)$ and $\gamma(t+2\pi/k)$ for all $t\in S^1$ and we have $\text{minind}_{M}(\gamma) \leq k$. 

Assume by contradiction that $\text{minind}_{M}(\gamma) < k$. Then $\gamma$ minimizes between $\gamma(t)$ and $\gamma(t+2\pi/(k-1))$ for all $t\in S^1$. By the lemma $\gamma_1$ minimizes between $\gamma_1(t)$ and $\gamma_1(t+2\pi/(k-1))$ for all $t\in S^1$ so that $ \text{minind}_{M_1}(\gamma_1) < k$ a contradiction.  
\end{proof}

\begin{definition} Let $\mathbb{T}^n$ be a standard flat $n$-torus and $\gamma \colon S^1 \to \mathbb{T}^n$ a closed geodesic representing the free homotopy class $[(m_1, \ldots, m_n)] \in \pi_1(\mathbb{T}^n)$. We will call such a curve an \emph{$(m_1, \ldots ,m_n)$-geodesic}.
\end{definition}

\begin{ex}\normalfont \label{2torus_ex} Let $\mathbb{T}^2$ be a flat rectangular two-torus and $\gamma \colon S^1 \to \mathbb{T}^2$ the $(1,2)$-geodesic with associated point $\bar{x} \in (\mathbb{T}^2)^4$ as shown in Figure~\ref{2torus}. As a closed geodesic $\gamma$ is a critical point of the Morse energy function and is therefore a fixed point for the negative gradient flow. We will use a gradient-like vector for the uniform energy at $\bar{x}$ to restart the negative gradient flow. 

\begin{figure}[!htb]\centering
    \includegraphics[width=.6 \textwidth]{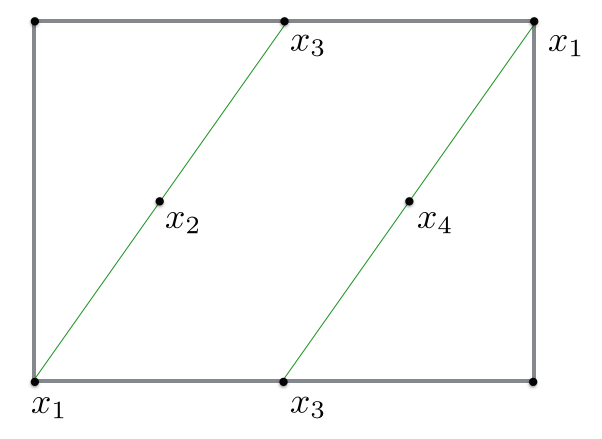}
    \caption{The flat rectangular two-torus $\mathbb{T}^2$. The green line is the $(1,2)$-geodesic with associated point $\bar{x}=(x_1,x_2,x_3,x_4) \in (\mathbb{T}^2)^4$. } 
    \label{2torus}
    \end{figure}

We first determine the candidate gradients and the gradient-like vectors for the point $\bar{x} \in (\mathbb{T}^2)^4$. There are two minimizing geodesics between each pair $(x_i, x_{i+1})$. The two initial velocity vectors of these geodesics at $x_i$ sum with the two initial velocity vectors of the minimizing geodesics between $(x_{i-1},x_i)$ to yield three potential gradient directions at $x_i$: $\{ \pm j,0 \} \in T_{x_i}\mathbb{T}^2$, where $j$ is a basis vector in the vertical direction. These directions combine via Lemma~\ref{direction_deriv3} to yield the following candidate gradients at $\bar{x}$:
\begin{align*} &\pm \langle  j,  -j, 0, 0 \rangle , \pm \langle  0, j,  -j, 0 \rangle , \pm \langle  0,0,j,  -j \rangle , \pm \langle  -j, 0,0, j \rangle 
\\ &\pm \langle j, -j, j, -j \rangle , \pm \langle j, 0, -j, 0 \rangle , \pm \langle 0, j, 0, -j \rangle , \langle 0,0,0,0 \rangle
\end{align*}
The gradient-like vectors are therefore $\pm \langle j, -j, j, -j \rangle$. We will use the negative of these directions, or the directions of maximal decrease of the uniform energy, to restart the negative gradient flow of the Morse energy function in an attempt to improve the minimizing properties of the closed geodesic. Figure~\ref{2torus2} illustrates this process when using the direction $\langle j, -j, j, -j \rangle$. After restarting the flow it stabilizes at a new critical point, the $(1,0)$-geodesic $\sigma \colon S^1 \to \mathbb{T}^2$.  We have $$\text{minind}(\sigma)=2 < 4 = \text{minind}(\gamma)$$ so that this restarted negative gradient flow has improved the minimizing properties of the associated closed geodesics.

\begin{figure}[!htb]\centering
    \includegraphics[width=.62 \textwidth]{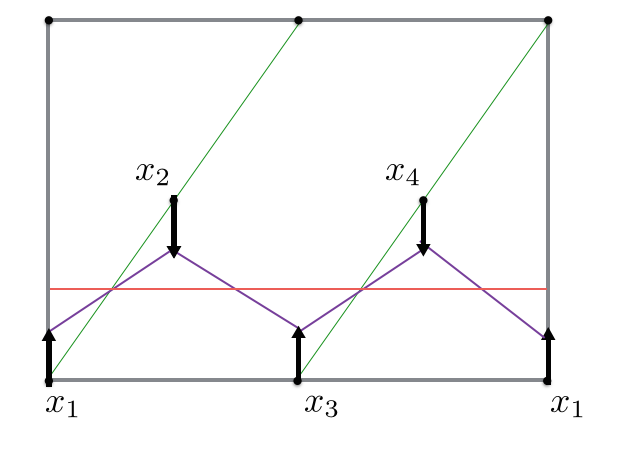}
    \caption{The torus $\mathbb{T}^2$ with the negative gradient-like vector $\langle j, -j, j, -j \rangle \in T_{\bar{x}}(\mathbb{T}^2)^4$. The purple curve is a dramatization of the piecewise smooth closed curve at which we would restart the negative gradient flow, and the red curve depicts the new critical point, a $(1,0)$-geodesic. } 
    \label{2torus2}
    \end{figure}

\end{ex}

\begin{proof}[Proof of Theorem~\ref{2torus_thm}] Let $(y_1, \ldots, y_n)$ be standard coordinates on $\mathbb{T}^n$ with associated basis of tangent vectors $(\partial y_1, \ldots , \partial y_n)$. Let $\gamma$ be a $(m_1, \ldots, m_n)$-geodesic and without loss of generality assume that $m_1= \max{ \{m_i \} }$ so that $k=\text{minind}(\gamma)=2\,m_1$. Then any choice $\bar{x} \in (\mathbb{T}^n)^{k}$ of evenly spaced points on $\gamma$ has the property that every pair $(x_i, x_{i+1})$ is a pair of cut points. Moreover we have that $(\partial y_1, -\partial y_1, \ldots,\partial y_1, -\partial y_1 ) \in T_{\bar{x}} (\mathbb{T}^n)^{k}$ is a gradient-like vector. In order to restart the negative gradient flow of the Morse energy from this critical point, we first perturb $\bar{x}$ in the negative of this gradient-like direction and reconnect the perturbed $(x_i, x_{i+1})$ via minimal geodesics. The resulting curve is a piecewise smooth geodesic which is homotopically trivial in the $y_1$ direction. Restarting the negative gradient flow of the Morse energy from this piecewise smooth geodesic results in a $(0, m_2, \ldots, m_n)$-geodesic with $\text{minind} \leq k$ . Iterating this process yields a closed geodesic $\sigma \colon S^1 \to \mathbb{T}^n$ with $\text{minind}(\sigma) = \min{\{m_i \}} \leq k$. 
\end{proof}

%
%
%
\section{Acknowledgements} 
The authors would like to thank Carolyn Gordon and Craig Sutton for their guidance through the research process. The authors would also like to thank Christina Sormani for suggesting the original problem and for helpful discussions and direction. Finally, the authors would like to thank the referee for many insightful comments. 


\bibliographystyle{abbrv}
\bibliography{epstein}

\end{document}